\documentclass[11pt]{article}
\usepackage[utf8]{inputenc}
\usepackage{amsthm}
\usepackage{amsfonts}
\usepackage{amsmath}
\usepackage{colonequals}
\usepackage{epsfig}
\usepackage{url}
\newtheorem{theorem}{Theorem}
\newtheorem{corollary}[theorem]{Corollary}
\newtheorem{lemma}[theorem]{Lemma}
\newtheorem{observation}[theorem]{Observation}
\newcommand{\GG}{{\cal G}}
\newcommand{\PP}{{\cal P}}
\DeclareMathOperator{\tw}{\text{tw}}
\DeclareMathOperator{\pll}{\text{polylog}}

\title{A note on sublinear separators and expansion}
\author{Zden\v{e}k Dvo\v{r}\'ak\thanks{Computer Science Institute, Charles University, Prague, Czech Republic. E-mail: {\tt rakdver@iuuk.mff.cuni.cz}.
Supported in part by ERC Synergy grant DYNASNET no. 810115.}}
\date{}

\begin{document}
\maketitle

\begin{abstract}
For a hereditary class $\GG$ of graphs, let $s_\GG(n)$ be the minimum function such that each $n$-vertex graph in $\GG$ has a balanced separator of order at most $s_\GG(n)$, and let
$\nabla_\GG(r)$ be the minimum function bounding the expansion of $\GG$, in the sense of bounded expansion theory of Nešetřil and Ossona de Mendez.
The results of Plotkin, Rao, and Smith (1994) and Esperet and Raymond (2018) imply that if $s_\GG(n)=\Theta(n^{1-\varepsilon})$ for some $\varepsilon>0$, then
$\nabla_\GG(r)=\Omega(r^{\frac{1}{2\varepsilon}-1}/\pll r)$ and $\nabla_\GG(r)=O(r^{\frac{1}{\varepsilon}-1}\pll r)$.  Answering a question of Esperet and Raymond, we show that neither of the exponents
can be substantially improved.
\end{abstract}

For an $n$-vertex graph $G$, a set $X\subseteq V(G)$ is a \emph{balanced separator} if each component of $G-X$ has at most $2n/3$ vertices.
Let $s(G)$ denote the minimum size of a balanced
separator in $G$, and for a class $\GG$ of graphs, let $s_\GG:\mathbb{N}\to\mathbb{N}$ be defined by
$$s_\GG(n)=\max\{s(G):G\in\GG, |V(G)|\le n\}.$$
Let us remark that this notion is related to the \emph{separation profile} studied for infinite graphs~\cite{sepprof}.
Classes with sublinear separators (i.e., classes $\GG$ with $s_\GG(n)=o(n)$) are of interest from the computational perspective,
as they naturally admit divide-and-conquer style algorithms.  They also turn out to have a number of intriguing structural properties;
the relevant one for this note is the connection to the density of shallow minors.

For a graph $G$ and an integer $r\ge 0$, an \emph{$r$-shallow minor of $G$} is any graph obtained from a subgraph of $G$ by contracting pairwise vertex-disjoint subgraphs, each of radius at most $r$.
The \emph{density} of a graph $H$ is $|E(H)|/|V(H)|$.
We let $\nabla_r(G)$ denote the maximum density of an $r$-shallow minor of $G$.
For a class $\GG$ of graphs, let $\nabla_\GG:\mathbb{N}\to\mathbb{R}\cup\{\infty\}$ be defined by
$$\nabla_\GG(r)=\sup\{\nabla_r(G):G\in\GG\}.$$
If $\nabla_\GG(r)$ is finite for every $r$, we say that the class $\GG$ has \emph{bounded expansion}.
The classes with bounded expansion have a number of common properties and computational applications;
we refer the reader to~\cite{nesbook} for more details.
The first connection between sublinear separators and bounded expansion comes from the work of Plotkin, Rao, and Smith~\cite{plotkin}.
\begin{theorem}[Plotkin, Rao, and Smith~\cite{plotkin}]\label{thm-prs}
For each $n$-vertex graph $G$ ($n\ge 2$) and all integers $l,h\ge 1$, either $G$ has a balanced separator of order
at most $n/l+2h^2l\log_2 n$, or $G$ contains a $(2l\log_2 n)$-shallow minor of $K_h$.
\end{theorem}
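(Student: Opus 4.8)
I would establish the statement in contrapositive form: assuming $G$ contains no $(2l\log_2 n)$-shallow minor of $K_h$, I produce a balanced separator of order at most $n/l+2h^2l\log_2 n$. One may first reduce to the case that $G$ is connected: if some component $C$ has more than $2n/3$ vertices, then a balanced separator of $C$ of the required order is also one of $G$, and $C$ excludes every shallow minor that $G$ excludes; and if every component has at most $2n/3$ vertices the empty separator already works.

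The main tool is \emph{BFS region growing}. Pick a vertex and grow a ball $R$ outward, one BFS layer at a time, partitioning the growth into $\log_2 n$ consecutive \emph{epochs} of $2l$ layers each, so that a ball surviving all epochs has radius at most $2l\log_2 n$. The basic dichotomy is: during an epoch, either the ball at least doubles in size, or among the $2l$ layers added there is one, say $L$, with $|L|<|R|/(2l)\le n/(2l)$; since such an $L$ separates the part of the ball strictly inside it from the rest of $G$, it is a \emph{sparse cut}. A ball cannot double more than $\log_2 n$ times before it exhausts all $n$ vertices, so the doubling alternative must fail in some epoch; hence, provided the starting vertex is chosen so that the interior and exterior of $L$ can both be kept below $2n/3$, this procedure always exposes a usable sparse cut, or else certifies that the growth is exponential throughout.

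Two things then need to be organized, the second being the real obstacle. First, a single sparse cut need not be a balanced separator, so I would iterate: repeatedly find a sparse cut inside the current largest part and add it to the separator under construction; since a cut found inside a part of size $m$ has fewer than $m/(2l)$ vertices and each step shrinks the largest part by a constant factor, the process halts after boundedly many rounds and the total size of these cuts telescopes to $O(n/l)$. Second, and this is where the bulk of the argument lies, one must show that if the exponential-growth alternative genuinely persists, so that no usable sparse cut is ever produced, then $G$ does contain $K_h$ as a $(2l\log_2 n)$-shallow minor: a region doubling every $2l$ layers is, after $\log_2 n$ epochs, a connected radius-$(2l\log_2 n)$ subgraph meeting a constant fraction of $V(G)$, and one has to run this growth from several seeds and argue that the resulting low-radius balls can be made pairwise adjacent --- by contracting, by recursing on unions, or by absorbing small connecting sets --- so as to assemble the $h$ branch sets of a $K_h$-model. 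Closing the ledger is the delicate part: one must simultaneously keep every extracted cut of order at most $n/l$, keep the accumulated separator within the extra budget $2h^2l\log_2 n$ (roughly $h$ uses of the non-doubling or absorption step, each costing a shell of $O(hl\log_2 n)$ vertices), and force precisely a $K_h$ minor at depth $2l\log_2 n$ once the cuts run out; the two error terms $n/l$ and $2h^2l\log_2 n$ in the statement are exactly the tally of these constraints.
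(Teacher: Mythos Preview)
The paper does not contain a proof of this theorem: it is quoted as a result of Plotkin, Rao, and Smith and used only as a black box (in the proof of Corollary~\ref{cor-lower} and in the remark following Lemma~\ref{lemma-sdex}). There is therefore nothing in the paper to compare your proposal against.

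For what it is worth, your sketch is in the spirit of the original Plotkin--Rao--Smith argument: the engine there is indeed iterated BFS region growing with the doubling/sparse-layer dichotomy you describe, and the $n/l$ term does arise from telescoping the sparse cuts. The part you correctly identify as the crux---forcing a $K_h$ shallow minor when sparse cuts refuse to appear---is handled in the original by growing candidate supernodes one at a time and, whenever a freshly grown ball fails to be adjacent to some earlier supernode, peeling off and discarding a thin boundary shell (charged against the $2h^2l\log_2 n$ budget: at most $O(h)$ growth attempts, each potentially shedding $O(h)$ shells of size $O(l\log_2 n)$). Your proposal gestures at this mechanism (``absorbing small connecting sets'') but does not pin down how adjacency between the $h$ branch sets is enforced or how the $h^2$ factor actually arises in the ledger; as written it is a plausible outline rather than a proof. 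If you intend to include a self-contained argument you will need to make that step precise; otherwise citing the original, as the paper does, is the standard choice.
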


As observed in~\cite{espsublin} (and qualitatively in~\cite{grad2,dvorak2016strongly}), this has the following consequence.
\begin{corollary}\label{cor-lower}
Suppose $\GG$ is a class of graphs such that $s_\GG(n)=\Omega(n^{1-\varepsilon})$ for some $\varepsilon>0$.
Then $\nabla_\GG(r)=\Omega(r^{\frac{1}{2\varepsilon}-1}/\pll r)$.
\end{corollary}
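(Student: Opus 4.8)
The plan is to apply Theorem~\ref{thm-prs} in its contrapositive form to a single, carefully sized graph of $\GG$. Two elementary facts are used throughout: $\nabla_r(G)$ is non-decreasing in $r$ (a subgraph of radius at most $r$ also has radius at most $r+1$, so every $r$-shallow minor is an $(r+1)$-shallow minor), and $K_h$ has density $(h-1)/2$. Unfolding the hypothesis, fix $c>0$ and $N_0$ with $s_\GG(n)\ge cn^{1-\varepsilon}$ for all $n\ge N_0$; in particular $\GG$ contains a graph with an edge, so $\nabla_\GG(r)\ge\tfrac12$ for every $r$. When $\varepsilon\ge\tfrac12$ the claimed bound $r^{\frac1{2\varepsilon}-1}/\pll r$ may be taken to be at most $1$, so the statement follows from $\nabla_\GG(r)\ge\tfrac12$; hence assume $\varepsilon<\tfrac12$ from now on.

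The decisive point is that $s_\GG(n)\ge cn^{1-\varepsilon}$ holds for \emph{every} large $n$, not just along some sequence, so given a large target $r$ we may choose $n=n(r)$ at will --- say, the largest integer with $\tfrac{16}{c}n^\varepsilon\log_2 n\le r$. Then $n\to\infty$ with $r$ (so $n\ge N_0$ once $r$ is large), $n^\varepsilon\log_2 n=\Theta(r)$, and consequently $\log_2 n=\Theta(\log r)$ and $n=\Theta\bigl((r/\log r)^{1/\varepsilon}\bigr)$. Pick $G\in\GG$ attaining $s(G)=s_\GG(n)\ge cn^{1-\varepsilon}$ and set $m\colonequals|V(G)|$, so that $cn^{1-\varepsilon}\le m\le n$. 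Apply Theorem~\ref{thm-prs} to $G$ with
$$l\colonequals\Bigl\lfloor\tfrac{r}{2\log_2 m}\Bigr\rfloor\qquad\text{and}\qquad h\colonequals\Bigl\lfloor\sqrt{\tfrac{c\,n^{1-\varepsilon}}{8r}}\,\Bigr\rfloor ,$$
which for $r$ large satisfy $l\ge1$ and $h\ge2$ (the latter because $\varepsilon<\tfrac12$ forces $n^{1-\varepsilon}/r\to\infty$). Using $2l\log_2 m\le r$ one gets $2h^2l\log_2 m\le h^2r\le\tfrac c8 n^{1-\varepsilon}$, and using the defining inequality of $n(r)$ one gets $m/l\le n/l\le 4n\log_2 n/r\le\tfrac c4 n^{1-\varepsilon}$; hence the Plotkin--Rao--Smith separator bound $m/l+2h^2l\log_2 m$ is strictly below $cn^{1-\varepsilon}\le s(G)$. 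Since $s(G)$ is the minimum order of a balanced separator of $G$, the first alternative of Theorem~\ref{thm-prs} is impossible, so $G$ contains a $(2l\log_2 m)$-shallow minor of $K_h$.

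Reading off the estimate is then routine: $\nabla_{2l\log_2 m}(G)\ge(h-1)/2$, and since $2l\log_2 m\le r$, monotonicity gives $\nabla_\GG(r)\ge\nabla_r(G)\ge(h-1)/2=\Omega\bigl(\sqrt{n^{1-\varepsilon}/r}\bigr)$. Substituting $n=\Theta\bigl((r/\log r)^{1/\varepsilon}\bigr)$ gives $n^{1-\varepsilon}=\Theta\bigl((r/\log r)^{\frac1\varepsilon-1}\bigr)$, whence $\sqrt{n^{1-\varepsilon}/r}=\Theta\bigl(r^{\frac1{2\varepsilon}-1}/(\log r)^{\frac1{2\varepsilon}-\frac12}\bigr)$, which is of the required form $\Omega\bigl(r^{\frac1{2\varepsilon}-1}/\pll r\bigr)$.

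I expect the only real work to lie in the middle paragraph: checking that $l$ and $h$ are legitimate parameters (positive integers, and $h\ge2$) and that the separator bound genuinely drops below $s(G)$, all while carrying the polylogarithmic factors correctly through the change of variables $r\leftrightarrow n$. The one conceptual thing to get right --- and what makes the argument go through for all $r$ rather than just a sparse set of values --- is to choose $n$ as a function of $r$ using that the hypothesis supplies a witness for \emph{every} large $n$, instead of trying to argue along the (possibly very lacunary) sequence of orders of the graphs that witness $s_\GG(n)=\Omega(n^{1-\varepsilon})$.
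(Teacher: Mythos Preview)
Your proof is correct and follows essentially the same approach as the paper: choose $n$ as a suitable function of $r$, take a witness $G\in\GG$ with $s(G)=s_\GG(n)$, and apply Theorem~\ref{thm-prs} with parameters $l,h$ tuned so that the separator alternative is blocked and the resulting minor depth $2l\log_2 |V(G)|$ is at most~$r$. Your version is slightly more careful than the paper's (distinguishing $m=|V(G)|$ from $n$, and disposing of $\varepsilon\ge\tfrac12$ separately), but the argument is the same.
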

\begin{proof}
Consider a sufficiently large integer $r$, and let
$n=\lfloor r^{\frac{1}{\varepsilon}}\log_2^{-2/\varepsilon} r\rfloor$,
$l=\lfloor \tfrac{1}{2}r\log_2^{-1} n\rfloor$, and
$h=\lfloor \tfrac{1}{2}n^{1/2}l^{-1}\log_2^{-1} n\rfloor$.
Note that 
\begin{align*}
2h^2l\log_2 n&<n/l\\
n^\varepsilon&\le r\log_2^{-2} r\le r\\
\log_2 n&\le \tfrac{1}{\varepsilon}\log_2 r\\
2l\log_2 n&\le r\le 3l\log_2 n.
\end{align*}
Consequently, we have
$$\frac{n}{l}+2h^2l\log_2 n\le\frac{2n}{l}\le \frac{6n\log_2 n}{r}\le 6n^{1-\varepsilon}\frac{\log_2 n}{\log_2^2 r}\le \frac{6}{\varepsilon\log_2 r}n^{1-\varepsilon}<s_\GG(n).$$
Let $G\in\GG$ be a graph with at most $n$ vertices and with no balanced separator
of order less than $s_\GG(n)$; then Theorem~\ref{thm-prs} implies $G$ contains an $r$-shallow minor of $K_h$,
implying that
$$\nabla_\GG(r)\ge \nabla_r(G)\ge \frac{|E(K_h)|}{|V(K_h)|}=\Omega(h)=\Omega(r^{\frac{1}{2\varepsilon}-1}/\pll r).$$
\end{proof}

Dvořák and Norin~\cite{dvorak2016strongly} proved that surprisingly, a converse to Corollary~\ref{cor-lower} holds as well.
Subsequently, Esperet and Raymond~\cite{espsublin} gave a simpler argument with a better exponent:
they state their result with $O(r^{\frac{1}{\varepsilon}}\pll r)$ bound, but an analysis of their argument shows that the
exponent can be improved by $1$. We include the short proof for completeness; the proof uses the following result establishing the connection
between separators and treewidth.
\begin{theorem}[Dvořák and Norin~\cite{dnorin}]\label{thm-tw}
Let $G$ be a graph and $k$ an integer.
If $s(H)\le k$ for every induced subgraph $H$ of $G$, then $\tw(G)\le 15k$.
\end{theorem}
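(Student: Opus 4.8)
The plan is to build a tree decomposition of $G$ of width at most $15k$ directly, by divide and conquer. The natural reformulation is through \emph{weighted} balanced separators: it is a standard fact that, up to a constant factor, $\tw(G)$ equals the largest value, over all weightings $\mu\colon V(G)\to\mathbb{R}_{\ge 0}$, of the smallest order of a separation $(A,B)$ of $G$ with $\mu(A\setminus B),\mu(B\setminus A)\le\tfrac23\mu(V(G))$. One direction is the easy observation that a centroid bag of a tree decomposition $\tfrac23$-balances any weighting; the other is the recursive construction in which one keeps a ``current region'' $C$ together with a small ``boundary'' $S$ forced into the current bag, finds a balanced separator $X$ of the relevant piece, forms the new bag $S\cup X$, and recurses into the components of the piece with $X$ added to their new boundaries. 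So it is enough to show that the hypothesis forces every weighting of $G$ to admit a $\tfrac23$-balanced separation of order $O(k)$, and then to track constants to arrive at $15k$.

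To extract such weighted separators from the hypothesis, which only supplies \emph{uniform} balanced separators of induced subgraphs, I would proceed iteratively. Given a weighting with support $W$, apply the hypothesis to the induced subgraph $G[W]$: this produces at most $k$ vertices whose removal breaks $G[W]$ into pieces each carrying at most $\tfrac23$ of the total weight. The obstacle is that these pieces may be reconnected in $G$ through vertices outside $W$, so that a single application does not even yield a separation of $G$. One then restricts to the induced subgraph on the (unique) component that still carries more than $\tfrac23$ of the weight — together with the separator vertices accumulated so far — and repeats. Organised as one recursion, this is exactly the tree-decomposition construction above, with each step legitimate because every subgraph we touch is induced in $G$ and hence covered by the hypothesis; Theorem~\ref{thm-prs}-style bookkeeping then recovers a concrete constant.

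The crux is controlling how fast the accumulated separator, equivalently the boundary $S$, grows. A separator that is balanced with respect to the whole vertex set of the current piece need not be balanced with respect to the boundary $S$ of that piece; if almost all of $S$ lands in one component, the new boundary there grows by up to $k$, and done naively this can recur at every one of the $\Theta(\log n)$ levels of the recursion, yielding only $\tw(G)=O(k\log n)$. Eliminating this logarithmic factor to reach a bound linear in $k$ is the real content, and is where the constant $15$ is spent. The plan is to do this with a careful amortised argument — arranging that every boundary-increasing step is paid for by a constant-factor decrease of a suitable size parameter, so that at most a bounded number of such steps occur along any root-to-leaf path of the decomposition (alternatively, one may try to start from a minimal induced subgraph of $G$ that still carries a bramble or tangle of order $\Omega(k)$, so that the recursion has bounded depth). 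I expect this bookkeeping to be the delicate part of the proof, the remaining steps being routine.
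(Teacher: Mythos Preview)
The paper does not prove Theorem~\ref{thm-tw}; it is quoted from~\cite{dnorin} as a black box and then used in the proofs of Theorem~\ref{thm-upper} and Lemma~\ref{lemma-twtos}. There is therefore nothing in the present paper to compare your proposal against.

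On the merits of the proposal itself: you have correctly laid out the standard recursive scheme and, crucially, correctly located the one real difficulty---the naive recursion yields only $\tw(G)=O(k\log |V(G)|)$, and removing the logarithm is the entire content of the theorem (the $O(k\log n)$ bound is classical). But you have not actually supplied the idea that removes it. ``An amortised argument arranging that every boundary-increasing step is paid for by a constant-factor decrease of a suitable size parameter'' is a wish, not a mechanism: which parameter, and why must a separator that is merely $\tfrac{2}{3}$-balanced for the \emph{vertex count} of the current piece force that parameter to drop when the boundary $S$ happens to land almost entirely on one side? That is exactly the obstacle you flagged two sentences earlier, and the obvious potentials (number of vertices in the piece, size of $S$) do not by themselves work. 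The alternative you float---passing to a minimal induced subgraph carrying a large bramble or tangle---faces the same issue in a different guise, and your reference to ``Theorem~\ref{thm-prs}-style bookkeeping'' does not help, since Theorem~\ref{thm-prs} is about finding shallow clique minors, not about bounding boundaries in a tree-decomposition recursion. Until you produce the concrete potential or the concrete minimality argument, what you have is a proof of the easy $O(k\log n)$ bound together with an accurate description of what remains to be done.
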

We also need a simple observation on shallow minors and subdivisions.
\begin{observation}\label{obs-subdiv}
For every integer $r\ge 0$, if $H$ is an $r$-shallow minor of a graph $G$
and $H$ has maximum degree at most three, then a subgraph of $G$ is isomorphic
to a graph obtained from $H$ by subdividing each edge at most
$4r$ times.
\end{observation}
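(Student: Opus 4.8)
The plan is to convert the branch decomposition witnessing the shallow minor into a bounded-length topological copy of $H$, using the degree bound to keep the routes through each branch set short and pairwise disjoint. By definition of an $r$-shallow minor we may fix a subgraph $G'$ of $G$, pairwise vertex-disjoint connected subgraphs $(B_v)_{v\in V(H)}$ of $G'$ each of radius at most $r$, and for each edge $uv$ of $H$ an edge $f_{uv}$ of $G'$ with one end $\beta_{uv}\in V(B_u)$ and the other end $\beta_{vu}\in V(B_v)$. For each $v$ I would pick a center $c_v$ of $B_v$ and a breadth-first search tree $T_v$ of $B_v$ rooted at $c_v$; then $T_v$ has depth at most $r$, and hence every path in $T_v$ has length at most $2r$.

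The crux -- and the only place the degree bound enters -- is a routing claim within each branch set: for every $v$ there is a vertex $m_v\in V(B_v)$ and, for each $u\in N_H(v)$, a path $R_{v,u}$ in $T_v$ from $m_v$ to $\beta_{vu}$, so that any two of the paths $R_{v,u}$ meet only in $m_v$. To prove it, observe that $|N_H(v)|\le 3$, so the minimal subtree of $T_v$ containing the at most three ports $\beta_{vu}$ has at most three leaves and therefore at most one vertex of degree at least three; it is thus a (possibly degenerate) spider, and I would take $m_v$ to be its center and $R_{v,u}$ the leg ending at $\beta_{vu}$. When some ports coincide, or when that subtree degenerates to a path or a single vertex, $m_v$ is instead chosen to be an appropriate one of the ports. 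Since each $R_{v,u}$ is a path of $T_v$, it has length at most $2r$. I expect this small case analysis to be the only mildly delicate point; everything else is bookkeeping.

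To finish, for each edge $uv$ of $H$ let $W_{uv}$ be the concatenation of $R_{u,v}$, the edge $f_{uv}$, and $R_{v,u}$; because $R_{u,v}\subseteq B_u$ and $R_{v,u}\subseteq B_v$ are disjoint, $W_{uv}$ is a genuine path of $G$ from $m_u$ to $m_v$ of length at most $2r+1+2r=4r+1$, all of whose internal vertices lie in $(V(B_u)\setminus\{m_u\})\cup(V(B_v)\setminus\{m_v\})$. I would then check that $\{m_v:v\in V(H)\}$ together with the paths $W_{uv}$ is a subgraph of $G$ isomorphic to $H$ with every edge subdivided at most $4r$ times: the $m_v$ are distinct since the $B_v$ are disjoint; an internal vertex of $W_{uv}$ avoids $m_x$ for $x\notin\{u,v\}$ (different branch set) and avoids $m_u,m_v$ by construction; two paths $W_{uv},W_{uw}$ sharing the end $m_u$ meet in no other vertex, since inside $B_u$ they follow $R_{u,v}$ and $R_{u,w}$, which meet only in $m_u$, and inside $B_v,B_w$ they are trivially separated; and $W_{uv},W_{xy}$ with $\{u,v\}\cap\{x,y\}=\emptyset$ use disjoint collections of branch sets. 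Since $W_{uv}$ has at most $4r$ internal vertices, this yields the claimed subdivision.
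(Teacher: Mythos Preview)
Your argument is correct and follows essentially the same route as the paper: reduce each branch set to a spider with at most three legs, take the branching vertex as the image of the corresponding vertex of $H$, and concatenate legs with the crossing edges to get paths of length at most $4r+1$. The paper compresses your BFS-tree-plus-Steiner-subtree construction into the phrase ``we can without loss of generality assume each subgraph $S_v$ \ldots\ is a subdivision of a star with at most three rays''; your version makes explicit why the legs have length at most $2r$ and handles the degenerate cases carefully, but the underlying idea is identical.
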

\begin{proof}
Since $H$ has maximum degree at most three, we can without loss of generality assume
each subgraph $S_v$ contracted to form a vertex $v\in V(H)$ is a subdivision of a star with at most three rays.
Since $S_v$ has radius at most $r$, the distance from the center of $S_v$ to each leaf is at most $2r$.
For each edge $uv\in E(H)$, let $e_{uv}$ be an edge of $G$ with one end in $S_u$ and the other end in $S_v$;
then $S_u+e_{uv}+S_v$ contains a path from the center of $S_u$ to the center of $S_v$ of length at most $4r+1$.
Consequently, the union of the graphs $S_v$ for $v\in V(H)$ and the edges $e_{uv}$ for $uv\in E(H)$
gives a subgraph of $G$ is isomorphic
to a graph obtained from $H$ by subdividing each edge at most $4r$ times.
\end{proof}
For $\alpha>0$, a graph $G$ is an \emph{$\alpha$-expander} if
$|N(S)|\ge \alpha|S|$ holds for every set $S\subseteq V(G)$ of size at most $|V(G)|/2$.

\begin{theorem}[Esperet and Raymond~\cite{espsublin}]\label{thm-upper}
Suppose $\GG$ is a hereditary class of graphs such that $s_\GG(n)=O(n^{1-\varepsilon})$ for some $\varepsilon>0$.
Then $\nabla_\GG(r)=O(r^{\frac{1}{\varepsilon}-1}\pll r)$.
\end{theorem}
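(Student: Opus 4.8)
The plan is to combine Theorem~\ref{thm-tw} with Observation~\ref{obs-subdiv} and standard facts about expanders. The first point is that, since $\GG$ is hereditary, Theorem~\ref{thm-tw} can be applied to every member of $\GG$: if $H$ is a subgraph of some $G\in\GG$ on $N$ vertices, then $G[V(H)]\in\GG$, all of its induced subgraphs have separators of order at most $s_\GG(N)=O(N^{1-\varepsilon})$, and hence $\tw(H)\le\tw(G[V(H)])=O(N^{1-\varepsilon})$. Thus subgraphs of members of $\GG$ have treewidth sublinear in their number of vertices; the whole argument aims to contradict this by producing a subgraph of $G$ whose treewidth is nearly linear in its order, unless that order is small.

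So fix $r$, suppose $\nabla_r(G)\ge d$ for some $G\in\GG$, and let me bound $d$. First choose an $r$-shallow minor $J$ of $G$ of density at least $d$, so that $J$ has average degree at least $d$. The heart of the proof --- and the step I expect to be the main obstacle --- is an extraction lemma: from $J$ one finds a graph $F$ of maximum degree at most $3$ on $m$ vertices, where $m$ is large enough that a bound on $m$ forces a bound on $d$ (concretely $m\ge\Omega(d/\pll d)$), such that $\tw(F)=\Omega(m/\pll m)$ and $F$ is a minor of $J$ in which every branch set is connected of radius $O(\pll m)$. I would assemble this from familiar ingredients: pass to a subgraph of $J$ of minimum degree at least $d/2$; isolate inside it an ``expanding core'' $H_1$, of order $n_1$ with $\log n_1=\Theta(\log m)$, that is an $\Omega(1/\pll n_1)$-expander (a standard expander-subgraph extraction, possibly after sparsifying to polylogarithmic degree); and then route into $H_1$ a fixed subcubic expander $F$ on a constant fraction of its vertices, realising the edges of $F$ by internally disjoint paths of length $O(\log n_1)$ (expander routing / topological universality of expanders). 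Then $F$ is a minor of $J$ with branch sets of radius $O(\log n_1)=O(\pll m)$, and, being a subcubic expander, satisfies $\tw(F)=\Omega(m/\pll m)$. The delicate point here is to keep the dilation, hence the branch-set radius, polylogarithmic in $m=|V(F)|$ rather than in $|V(J)|$, which is not controlled by $r$ and $d$; this is why the routing is performed inside the expanding core and why one must ensure that core is not negligibly small compared with $J$.

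With $F$ at hand, composing the two shallow-minor relations shows that $F$ is a minor of $G$ in which every branch set has radius $\rho=O(r\,\pll m)$. Since $F$ has maximum degree at most $3$, Observation~\ref{obs-subdiv} produces a subgraph $F'$ of $G$ obtained from $F$ by subdividing each edge at most $4\rho$ times; as $F$ is subcubic it has $O(m)$ edges, so $|V(F')|=O(m\,r\,\pll m)$. On the one hand $F'$ --- equivalently $G[V(F')]$ --- lies in $\GG$, so by the first paragraph $\tw(F')=O(|V(F')|^{1-\varepsilon})=O((mr\,\pll m)^{1-\varepsilon})$. On the other hand $F$ is a minor of $F'$, so $\tw(F')\ge\tw(F)=\Omega(m/\pll m)$. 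Comparing the two estimates gives $m^{\varepsilon}=O(r^{1-\varepsilon}\,\pll m)$; since polylogarithmic factors are subpolynomial this first forces $m=r^{O(1/\varepsilon)}$, hence $\log m=O(\log r)$, and feeding this back yields $m=O(r^{1/\varepsilon-1}\,\pll r)$. Therefore $d=O(m\,\pll m)=O(r^{1/\varepsilon-1}\,\pll r)$, and as $G\in\GG$ was arbitrary, $\nabla_\GG(r)=O(r^{1/\varepsilon-1}\,\pll r)$.

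In short, everything except the extraction of the second paragraph is routine: Theorem~\ref{thm-tw} to convert separators into treewidth, Observation~\ref{obs-subdiv} to realise a bounded-degree shallow minor as a short subdivision inside $G$, minor-monotonicity of treewidth, the (nearly) linear treewidth of bounded-degree expanders, and a one-step bootstrap to discard the spurious $\log m$. The real content is the extraction: obtaining from a dense shallow minor a bounded-degree expander whose order is comparable to the density and realising it as a minor of depth only polylogarithmic in that order, so that the blow-up incurred by Observation~\ref{obs-subdiv} remains polylogarithmic and the final treewidth inequality closes.
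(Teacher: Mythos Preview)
Your overall architecture matches the paper's exactly: from the dense $r$-shallow minor extract a subcubic graph of nearly-linear treewidth, realise it inside $G$ via Observation~\ref{obs-subdiv}, and compare the treewidth upper bound from Theorem~\ref{thm-tw} against the lower bound. The difference lies solely in the extraction step you single out as the ``main obstacle.'' The paper first applies the Shapira--Sudakov result to obtain a \emph{subgraph} $H_1\subseteq H$ on $n$ vertices that is a $(1/\pll n)$-expander with average degree $\Omega(d)$ (hence $n\ge\Omega(d)$), and then invokes the Chekuri--Chuzhoy degree-three treewidth sparsifier to get a subcubic \emph{subgraph} $H_2\subseteq H_1$ with $\tw(H_2)=\Omega(n/\pll n)$. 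Because $H_2$ is a subgraph of $H$, it is itself an $r$-shallow minor of $G$, so Observation~\ref{obs-subdiv} produces a subdivision with at most $4r$ new vertices per edge---there is no compounding of depths, and the ``delicate point'' you raise about keeping the dilation polylogarithmic in $m$ rather than in $|V(J)|$ never arises. Your route via expander routing (embedding a fixed subcubic expander as a topological minor with short paths) is also correct and yields the same bound, but it needlessly increases the depth to $O(r\,\pll m)$ and invokes routing/universality machinery where the paper gets by with two black-box citations. (Minor quibble: with expansion only $1/\pll n_1$ the routed paths have length $O(\pll n_1)$ rather than $O(\log n_1)$, and one typically embeds on $\Omega(n_1/\pll n_1)$ rather than a constant fraction of the vertices; neither affects the final exponent.)
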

\begin{proof}
Let $H$ be an $r$-shallow minor of a graph $G\in\GG$ and let $d$ be the density of $H$.
By the result of Shapira and Sudakov~\cite{shapira2015small}, there exists a subgraph $H_1\subseteq H$
of average degree $\Omega(d)$ such that, letting $n=|V(H_1)|$, the graph $H_1$ is a $(1/\pll n)$-expander.
Consequently, $H_1$ has treewidth $\Omega(n/\pll n)$.  As Chekuri and Chuzhoy~\cite{chekuri2014degree} proved,
$H_1$ has a subcubic subgraph $H_2$ of treewidth $\Omega(n/\pll n)$.  Since $H_2$ is a subcubic $r$-shallow minor
of $G$, Observation~\ref{obs-subdiv} implies that $G$ has a subgraph $G_2$ obtained from $H_2$ by subdividing each edge at most
$4r$ times, and thus $|V(G_2)|=O(r|V(H_2)|)=O(rn)$.  Furthermore, since $H_2$ is a minor of $G_2$, we have
\begin{equation}\label{eq-a}
\tw(G_2)\ge\tw(H_2)=\Omega(n/\pll n).
\end{equation}
On the other hand, since $\GG$ is hereditary, $G_2$ is a spanning subgraph of a graph from $\GG$, 
and thus every subgraph of $G_2$ has a balanced separator of size $O(|V(G_2)|^{1-\varepsilon})=O((rn)^{1-\varepsilon})$.
By Theorem~\ref{thm-tw}, this implies $\tw(G_2)=O((rn)^{1-\varepsilon})$.  Combining this inequality with (\ref{eq-a}),
this gives $n=O(r^{\frac{1}{\varepsilon}-1}\pll n)=O(r^{\frac{1}{\varepsilon}-1}\pll r)$.  Since $H_1$ has $n$ vertices and average degree
$\Omega(d)$, we have $d=O(n)=O(r^{\frac{1}{\varepsilon}-1}\pll r)$.  This holds for every $r$-shallow minor of a graph from $\GG$, and thus
$\nabla_\GG(r)=O(r^{\frac{1}{\varepsilon}-1}\pll r)$.
\end{proof}

For $0<\varepsilon\le 1$,
\begin{itemize}
\item let $b_\varepsilon$ denote the supremum of real numbers $b$ for which every hereditary class $\GG$ of graphs such that $s_\GG(n)=\Theta(n^{1-\varepsilon})$
satisfies $\nabla_\GG(r)=\Omega(r^b)$, and
\item let $B_\varepsilon$ denote the infimum of real numbers $B$ for which every hereditary class $\GG$ of graphs such that $s_\GG(n)=\Theta(n^{1-\varepsilon})$
satisfies $\nabla_\GG(r)=O(r^B)$.
\end{itemize}
Corollary~\ref{cor-lower} and Theorem~\ref{thm-upper} give the following bounds.
\begin{corollary}\label{cor-known}
For $0<\varepsilon\le 1$,
$$\max\Bigl(\frac{1}{2\varepsilon}-1,0\Bigr)\le b_\varepsilon\le B_\varepsilon\le \frac{1}{\varepsilon}-1.$$
\end{corollary}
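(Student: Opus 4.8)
The plan is to read off all three inequalities directly from Corollary~\ref{cor-lower} and Theorem~\ref{thm-upper}, using only the elementary fact that $\nabla_\GG$ is non-decreasing in $r$ (an $r$-shallow minor is also an $(r+1)$-shallow minor). Throughout, by a \emph{witness class} I mean a hereditary class $\GG$ with $s_\GG(n)=\Theta(n^{1-\varepsilon})$.

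First I would handle $b_\varepsilon\ge\max\bigl(\tfrac{1}{2\varepsilon}-1,\,0\bigr)$. Any witness class in particular satisfies $s_\GG(n)=\Omega(n^{1-\varepsilon})$, so Corollary~\ref{cor-lower} applies and gives $\nabla_\GG(r)=\Omega(r^{\frac{1}{2\varepsilon}-1}/\pll r)$; since a polylogarithmic denominator is dominated by any fixed positive power of $r$, this yields $\nabla_\GG(r)=\Omega(r^b)$ for every $b<\tfrac{1}{2\varepsilon}-1$, uniformly over all witness classes, so $b_\varepsilon\ge\tfrac{1}{2\varepsilon}-1$. For the $0$ term (which matters only when $\varepsilon>\tfrac12$), note that $s_\GG(n)=\Theta(n^{1-\varepsilon})$ with $0<\varepsilon\le1$ forces $s_\GG(n)\ge1$ for all large $n$, so every witness class contains a graph with an edge; that edge is a $0$-shallow minor of density $\tfrac12$, so $\nabla_\GG(r)\ge\nabla_\GG(0)\ge\tfrac12=\Omega(r^0)$, giving $b_\varepsilon\ge0$.

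Next I would handle $B_\varepsilon\le\tfrac{1}{\varepsilon}-1$. Any witness class in particular satisfies $s_\GG(n)=O(n^{1-\varepsilon})$, so Theorem~\ref{thm-upper} gives $\nabla_\GG(r)=O(r^{\frac{1}{\varepsilon}-1}\pll r)$; since a polylogarithmic factor is absorbed into any fixed positive power of $r$, this yields $\nabla_\GG(r)=O(r^B)$ for every $B>\tfrac{1}{\varepsilon}-1$, uniformly over all witness classes, so $B_\varepsilon\le\tfrac{1}{\varepsilon}-1$.

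Finally, $b_\varepsilon\le B_\varepsilon$. Here I would fix a single witness class $\GG_0$ — such classes exist, for instance those constructed in the remainder of this note, or, when $\varepsilon=1/d$ for a positive integer $d$, the class of all (finite) subgraphs of the $d$-dimensional grid. For any $b$ admissible in the definition of $b_\varepsilon$ and any $B$ admissible in the definition of $B_\varepsilon$ we then have $\nabla_{\GG_0}(r)=\Omega(r^b)$ and $\nabla_{\GG_0}(r)=O(r^B)$ at the same time, which is impossible unless $b\le B$; taking the supremum over admissible $b$ and the infimum over admissible $B$ gives $b_\varepsilon\le B_\varepsilon$. The one point in this argument that is not completely immediate is the existence of the witness class $\GG_0$ (without it, $b_\varepsilon$ and $B_\varepsilon$ would be vacuously $+\infty$ and $-\infty$), so that is the step I would expect to require the most care — although it is already supplied by the explicit constructions elsewhere in the paper.
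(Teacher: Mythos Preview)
Your proposal is correct and follows exactly the route the paper takes: the paper's own proof is the single sentence ``Corollary~\ref{cor-lower} and Theorem~\ref{thm-upper} give the following bounds,'' and you have simply spelled out the routine details (absorbing the polylogarithmic factors into arbitrarily small powers of $r$, the trivial $b_\varepsilon\ge 0$ via an edge in the class, and the need for a witness class to compare $b_\varepsilon$ with $B_\varepsilon$). The only point worth flagging is that your forward reference to the constructions later in the paper for the existence of a witness class is exactly how the paper itself (implicitly) handles $b_\varepsilon\le B_\varepsilon$, so nothing is missing.
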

Esperet and Raymond~\cite{espsublin} asked whether either of these bounds (in particular, in terms of multiplicative constants) can be improved.
They suggest some insight into this question could be obtained by investigating the $d$-dimensional grids.  While the grids ultimately do not give
the best bounds we obtain, their analysis is instructive and we give it (for even $d$) in the following lemma.

Note that $b_{1/2}=0$, matching the lower bound from Corollary~\ref{cor-known}: Indeed, as proved by Lipton and Tarjan~\cite{lt79},
the class $\PP$ of planar graphs satisfies $s_\PP(n)=\Theta(n^{1/2})$,
and on the other hand, every minor of a planar graph is planar, implying $\nabla_\PP(r)\le 3=O(r^0)$.
However, 2-dimensional grids with diagonals give $B_{1/2}=1$, as we will show in greater generality in the next lemma.  Hence,
$b_\varepsilon$ is not always equal to $B_\varepsilon$.
\begin{lemma}\label{lemma-grids}
For every even integer $d$,
$$b_{1/d}\le \frac{d}{2}\le B_{1/d}.$$
\end{lemma}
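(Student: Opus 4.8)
The plan is to take for $\GG_d$ the hereditary closure of the class of $d$-dimensional grids with diagonals: for $n\ge 1$ let $Q^d_n$ be the strong product of $d$ paths on $n$ vertices, i.e.\ the graph with vertex set $\{1,\dots,n\}^d$ in which $u$ and $v$ are adjacent exactly when $0<\|u-v\|_\infty\le 1$, and let $\GG_d$ consist of all induced subgraphs of the graphs $Q^d_n$. The lemma follows once we establish (a) $s_{\GG_d}(n)=\Theta(n^{1-1/d})$, (b) $\nabla_{\GG_d}(r)=\Omega(r^{d/2})$, and (c) $\nabla_{\GG_d}(r)=O(r^{d/2}\pll r)$: indeed (a) together with (b) gives $B_{1/d}\ge d/2$, and (a) together with (c) gives $b_{1/d}\le d/2$.

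For (a) I would argue as in the classical grid separator theorem. The point special to $Q^d_n$ is that deleting a single hyperplane $\{v:v_i=c\}$ disconnects $\{v_i<c\}$ from $\{v_i>c\}$, since the surviving vertices on the two sides differ by at least $2$ in the $i$-th coordinate. Hence, given a subgraph $G$ of $Q^d_n$ on $N$ vertices, choosing a uniformly random offset $a\in\{0,\dots,L-1\}^d$ with $L=\lceil (N/2)^{1/d}\rceil$ and deleting all vertices $v$ of $G$ with $v_i\equiv a_i\pmod L$ for some $i$ leaves components of fewer than $L^d\le N/2$ vertices, while the expected number of deleted vertices is at most $dN/L=O(N^{1-1/d})$; this yields $s_{\GG_d}(n)=O(n^{1-1/d})$. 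For the matching lower bound I would use $Q^d_m$ with $m=\lfloor n^{1/d}\rfloor$: any balanced separator $X$ leaves a union $A$ of components with $m^d/3\le |A|\le 2m^d/3$, and since the plain grid is a spanning subgraph of $Q^d_m$, the vertex-isoperimetric inequality in the grid gives $|X|\ge |N(A)|=\Omega(m^{d-1})=\Omega(n^{1-1/d})$.

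For the upper bound (c) I would not invoke Theorem~\ref{thm-upper} (which only yields the weaker $O(r^{d-1}\pll r)$), but instead use that shallow minors of $Q^d_m$ are ``locally'' minors of bounded grids. Let $H$ be an $r$-shallow minor of $Q^d_m$; passing to a subgraph we may assume $H$ has minimum degree at least $\delta$, where $\delta$ is, up to a constant, the density to be bounded, and crudely $\delta=O(r^d)$ because each branch set is within distance $O(r)$ of only $O(r^d)$ vertices. By the standard fact that a graph of density $\delta$ contains $K_t$ with $t=\Omega(\delta/\sqrt{\log\delta})$ as an $O(\log\delta)$-shallow minor (a consequence of the Kostochka--Thomason theorem with control on branch-set size), and by composing shallow minors, $Q^d_m$ has a $K_t$-minor with branch sets of radius $\rho=O(r\log r)$. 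Since these $t$ pairwise adjacent branch sets all lie inside one ball of radius $O(\rho)$, which spans $O(\rho^d)$ vertices and hence $O(\rho^d)$ edges of $Q^d_m$, we get $\binom t2=O(\rho^d)$, so $t=O\big((r\log r)^{d/2}\big)$ and therefore $\delta=O(r^{d/2}\pll r)$.

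The lower bound (b) is where I expect the real work. It suffices to show that for each $r$ the grid $Q^{d}_{s}$ with $s=\Theta(r)$ contains a clique of order $\Omega(r^{d/2})$ as an $r$-shallow minor, i.e.\ with branch sets of radius at most $r$ inside themselves. Writing $d=2k$ and viewing $Q^{2k}_s$ as the strong product of $k$ copies of $Q^2_s$, I would bootstrap from the two-dimensional case via the principle that a (Cartesian) product of clique-minor models is again a clique-minor model of the strong product: if $A_1,\dots,A_p$ are the branch sets of a $K_p$-minor of $Q^2_s$ of radius $\rho_2$, then the sets $A_{i_1}\times\cdots\times A_{i_k}$ form a $K_{p^k}$-minor of the $k$-fold strong product, again of radius $\rho_2$ (distances in a strong product are coordinatewise maxima). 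Since a branch set of a minor of $Q^2_s$ can always be confined to a region of diameter $O(s)$, the heart of the matter is a planar-style construction showing that the $s\times s$ grid with diagonals has a $K_{\Omega(s)}$-minor whose branch sets have radius $O(s)$ — arranging $\Omega(s)$ pairwise touching connected sets, without overlaps, inside a graph of linear genus — and this is the step I anticipate being the main obstacle. Granting it, and choosing $s=\Theta(r)$ small enough that $\rho_2\le r$, we obtain $\nabla_{\GG_d}(r)\ge\tfrac12(s^{k}-O(1))=\Omega(r^{d/2})$, which finishes the proof; even a weaker bound of $\Omega(r^{d/2}/\pll r)$ in the two-dimensional step would still suffice for $B_{1/d}\ge d/2$.
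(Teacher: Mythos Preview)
Your outline chooses the right class, but the proposal is incomplete at the point you yourself flag: you leave the two–dimensional base case of (b) open, and with the strong product $P_s\boxtimes P_s$ this step is genuinely not for free. A direct ``odd columns versus even-column rows'' construction fails because a row restricted to every second column is disconnected in the king graph. It \emph{can} be repaired---take zig-zag paths $A_i=\{(2k-1,2i-1):k\}\cup\{(2k,2i):k\}$ and $B_j=\{(2j-1,2l):l\}\cup\{(2j,2l-1):l\}$ in $P_{2n}\boxtimes P_{2n}$; these are connected, pairwise disjoint (they occupy the four parity classes), and each $A_i$ touches each $B_j$, yielding a $K_{n,n}$ (hence $K_n$) minor of radius $O(n)$---and then your product trick gives $K_{n^{d/2}}$ in $Q^d_{2n}$. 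So your route works, but not without this extra idea. (Incidentally, the king graph has genus $\Theta(s^2)$, not linear.)

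The paper sidesteps all of this. First, it works with the step-$2$ grid $Q_n^d$ (adjacency when coordinates differ by at most $2$), which makes the disjointness/adjacency trick trivial: with $A_x$ fixing the first $d/2$ coordinates to $x$ and $B_y$ fixing the last $d/2$ coordinates to $y$ while restricting the first coordinate to even values, the $A_x$ with $x_1$ odd are disjoint from every $B_y$ yet adjacent to it, giving $K_{(2r)^{d/2}/2,(2r)^{d/2}}$ as an $r$-shallow minor of $Q^d_{2r}$ directly in dimension $d$---no bootstrapping from the planar case. Second, for the upper bound the paper gives a one-line argument with no polylog loss and no appeal to Kostochka--Thomason: for an $r$-shallow minor $H$, pick $v$ with $|B_v|$ minimum; then $\deg_H(v)<5^d|B_v|$ by bounded degree, while all neighbouring branch sets fit in a cube of side $O(r)$, so $(\deg_H(v)+1)|B_v|\le (O(r))^d$; taking the minimum gives $\deg_H(v)=O(r^{d/2})$. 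Your route through shallow clique minors is valid but loses a polylog and leans on a ``standard fact'' (small branch sets in the Kostochka--Thomason minor) that itself requires some work.
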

\begin{proof}
Let $Q_n^d$ denote the graph whose vertices are elements of $\{1,\ldots,n\}^d$ and two distinct vertices are adjacent
if they differ by at most $2$ in each coordinate.  Let $\GG_d$ denote the class consisting of graphs $Q_n^d$ for all $n\in\mathbb{N}$ and their induced subgraphs.
Note that $s_\GG(n)=\Theta(n^{1-1/d})$: Each induced subgraph $H$ of $Q_n^d$ can be represented as an intersection graph of axis-aligned unit cubes in $\mathbb{R}^d$
where each point is contained in at most $3^d$ cubes, and such graphs have balanced separators of order $O(|V(H)|^{1-1/d})$, see e.g.~\cite{teng1991unified}.
Conversely, standard isoperimetric inequalities show that $Q_n^d$ does not have a balanced separator smaller than $\Omega(n^{d-1})=\Omega(|V(Q_n^d)|^{1-1/d})$.
We claim that $\nabla_\GG(r)=\Theta(r^{d/2})$.

Consider any $r$-shallow minor $H$ of $Q_n^d$, and for $v\in V(H)$, let $B_v$ denote the subgraph of $Q_n^d$ of radius at most $r$ contracted to form $v$.
We have $\Delta(Q_n^d)<5^d$, and thus $\deg_H(v)<5^d|V(B_v)|$ holds for every $v\in V(H)$.  Let $v$ be the vertex of $H$ with $|V(B_v)|$ minimum, and let $c$
be a vertex of $Q_n^d$ such that each vertex of $B_v$ is at distance at most $r$ from $c$.  Note that if $uv\in V(H)$, then every vertex of $B_u$ is at
distance at most $3r+1$ from $c$.  Consequently, the pairwise vertex-disjoint subgraphs $B_u$ for $u\in N(v)$ are all contained
in a cube with side of length $12r+4$ centered at $c$, implying
$$(\deg_H(v)+1)|V(B_v)|\le |V(B_v)|+\sum_{u\in N(v)} |V(B_u)|\le (12r+5)^d,$$
and thus $\deg_H(v)<(12r+5)^d/|V(B_v)|$.  Therefore, since $\min(ax,b/x)\le \sqrt{ab}$ for every $a,b,x>0$, we have
$$\deg_H(v)<\min\bigl(5^d|V(B_v)|,(12r+5)^d/|V(B_v)|\bigr)\le (60r+25)^{d/2}.$$
Hence, each $r$-shallow minor of $Q_n^d$ has minimum degree $O(r^{d/2})$, and thus we have $\nabla_\GG(r)=O(r^{d/2})$.

On the other hand, consider the graph $Q_{2r}^d$.  For $x\in \{1,\ldots,2r\}^{d/2}$, let $A_x$ be the subgraph of $Q_{2r}^d$ induced
by vertices $(i_1, \ldots, i_d)$ such that $i_j=x_j$ for $j=1,\ldots, d/2$ and $i_j\in \{1,\ldots,2r\}$ for $j=d/2+1,\ldots, d$,
and let $B_x$ be the subgraph induced by vertices $(i_1, \ldots, i_d)$ such that $i_1\in \{2,4,\ldots,2r\}$, $i_j\in \{1,\ldots,2r\}$ for $j=2,\ldots, d/2$,
and $i_j=x_{j-d/2}$ for $j=d/2+1,\ldots, d$.
Each of these subgraphs has radius at most $r$, for all distinct $x,x'\in \{1,\ldots,2r\}^{d/2}$ we have $V(A_x)\cap V(A_{x'})=\emptyset$ and $V(B_x)\cap V(B_{x'})=\emptyset$,
and for all $x\in \{1,\ldots,2r\}^{d/2}$ such that $x_1$ is odd and $y\in \{1,\ldots,2r\}^{d/2}$, the graphs $A_x$ and $B_y$ are vertex-disjoint and $Q_{2r}^d$
contains an edge with one end $(x,y)\in V(A_x)$ and the other end in $(x+e_1,y)\in V(B_y)$.  Consequently, $K_{(2r)^{d/2}/2,(2r)^{d/2}}$ is an $r$-shallow minor of $Q_{2r}^d$,
implying $\nabla_\GG(r)=\Omega(r^{d/2})$.
\end{proof}

Lemma~\ref{lemma-grids} implies that $b_\varepsilon\le \tfrac{1}{2\varepsilon}$ when $\tfrac{1}{\varepsilon}$ is an even integer, and thus at these points the lower bound from
Corollary~\ref{cor-known} cannot be improved by more than $1$.  Actually, we can prove an even better bound for all values of $\varepsilon>0$.
To this end, let us first establish bounds on the size of balanced separators in certain graph classes.

\begin{lemma}\label{lemma-twtos}
Let $f,t:\mathbb{R}^+\to\mathbb{R}^+$ be non-decreasing functions, and let $p:\mathbb{R}^+\to\mathbb{R}^+$ be the inverse to the function $x\mapsto xt(x)$.
Let $G$ be a graph such that every induced subgraph $H$ of $G$ satisfies $s(H)\le f(|V(H)|)$.  Let $G'$ be a graph obtained from $G$ by subdividing each edge
at least $t(|V(G)|)$ times.  Then $s(H')\le 15f(p(2|V(H')|))+1$ for every induced subgraph $H'$ of $G'$.
\end{lemma}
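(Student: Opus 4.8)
The plan is to bound $\tw(H')$ and then invoke the standard fact that a graph of treewidth $w$ has a balanced separator of order at most $w+1$. First I would note that the target bound $15f(p(2|V(H')|))+1$ is non-decreasing in $|V(H')|$ (since $f$ and $p$ are non-decreasing), so the usual reduction applies: passing to the largest component of $H'$, an empty separator works unless that component has more than $\tfrac23|V(H')|$ vertices, in which case any balanced separator of it is also balanced in $H'$. Hence we may assume $H'$ is connected. Writing $t:=t(|V(G)|)$, every edge $e$ of $G$ is replaced in $G'$ by a path $\pi_e$ with at least $t$ internal (subdivision) vertices. If the $2$-core of $H'$ is empty then $H'$ is a tree and $s(H')\le1$, so we may assume its $2$-core $C$ is non-empty.

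The heart of the argument is the observation that the $2$-core $C$ is an honest subdivision of a subgraph of a \emph{small} induced subgraph of $G$. Since $C$ is an induced subgraph of $G'$ with minimum degree at least $2$, and an internal vertex of a path $\pi_e$ has only its two neighbours on $\pi_e$ available in $G'$, the condition ``degree $\ge2$'' propagates along $\pi_e$: if any internal vertex of $\pi_e$ lies in $C$ then all of $\pi_e$, including both endpoints of $e$, lies in $C$. Setting $U:=V(C)\cap V(G)$ and $F:=\{e\in E(G):\pi_e\subseteq C\}$, it follows that $C$ is precisely a subdivision of $D:=(U,F)$ in which each edge is subdivided at least $t$ times, and that each vertex of $D$ has the same degree in $D$ as in $C$, hence at least $2$. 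In particular $|F|=|E(D)|\ge|V(D)|=|U|$, and since $C$ contains at least $t$ distinct subdivision vertices for each edge of $F$,
$$|V(H')|\ge|V(C)|\ge |F|\cdot t\ge|U|\cdot t\ge|U|\cdot t(|U|),$$
using that $t$ is non-decreasing and $|U|\le|V(G)|$; therefore $|U|\le p(2|V(H')|)$.

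It remains to relate $\tw(H')$ to $\tw(D)$. Because no vertex of $H'-V(C)$ can be adjacent to two vertices of $C$ and no path through $H'-V(C)$ can join two vertices of $C$ (either situation would keep the relevant ``outside'' vertices at degree at least $2$ and prevent their deletion by the $2$-core operation), every component of the forest $H'-V(C)$ is attached to $C$ at a single vertex, so $\tw(H')=\max(\tw(C),1)=\tw(C)$. Since $D$ has minimum degree at least $2$ it contains a cycle, so $\tw(D)\ge2$, and subdividing edges of a graph of treewidth at least $2$ leaves its treewidth unchanged, so $\tw(C)=\tw(D)$. Finally $D$ is a subgraph of the induced subgraph $G[U]$ of $G$, every induced subgraph of $G[U]$ has a balanced separator of order at most $f(|U|)$, so Theorem~\ref{thm-tw} gives $\tw(G[U])\le15f(|U|)$ and hence
$$s(H')\le\tw(H')+1=\tw(D)+1\le\tw(G[U])+1\le15f(|U|)+1\le15f(p(2|V(H')|))+1.$$

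The main obstacle is the structural bookkeeping of the middle paragraph: verifying that taking the $2$-core really does clean up $H'$ into a subdivision of a subgraph $D$ of $G[U]$ with $|U|$ small — that is, that a subdivided path only partially present in $H'$ contributes no vertex to $C$, and that what hangs off $C$ is only a forest attached along single vertices. Once these facts are in place the vertex count $|U|\le p(2|V(H')|)$ and the treewidth estimate are routine, and the constants $15$ and $1$ come out of Theorem~\ref{thm-tw} and the treewidth-to-separator bound exactly as stated.
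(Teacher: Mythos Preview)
Your proof is correct and follows essentially the same approach as the paper: bound the number of ``original'' $G$-vertices underlying $H'$ via the subdivision count, apply Theorem~\ref{thm-tw} to the induced subgraph of $G$ on those vertices, and convert the treewidth bound into a separator bound. The only difference is cosmetic: the paper works directly with $A=V(H')\cap V(G)$ and uses connectivity of $H'$ to get at least $|A|-1$ full subdivision paths (hence $|A|\le p(2|V(H')|)$), while you pass to the $2$-core first and use $|E(D)|\ge |V(D)|$ to get $|U|\le p(|V(H')|)$; the paper's route is slightly shorter since it avoids the bookkeeping about pendant trees and the $2$-core propagation, but the two arguments are interchangeable.
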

\begin{proof}
Without loss of generality, we can assume $H'$ is connected, as otherwise it suffices to consider the size of a balanced separator in the largest component of $H'$.
Let $B$ be the set of vertices of $G'$ created by subdividing the edges, and let $A=V(H')\setminus B$ and $a=|A|$.
If $a\le 1$, then $H'$ is a tree, and thus it has balanced separator of size at most $1$.  Hence, assume that $a\ge 2$.
Since $H'$ is connected, we have $|V(H')|\ge (a-1)t(|V(G)|)\ge at(a)/2$, and thus $a\le p(2|V(H')|)$.
Note that $H'$ is obtained from a subgraph $H$ of $G$ with $a$ vertices by subdividing edges and repeatedly adding pendant vertices.
By Theorem~\ref{thm-tw} we have $\tw(H)\le 15f(a)$, and thus
$\tw(H')\le \tw(H)\le 15f(a)\le 15f(p(2|V(H')|))$.  As proved in~\cite{rs2}, every graph
of treewidth at most $c$ has a balanced separator of order at most $c+1$.  Consequently, $H'$ has a balanced separator
of order at most $15f(p(2|V(H')|))+1$.
\end{proof}

For a graph $G$ with $m$ vertices and $0<\varepsilon<1$, let
$G^\varepsilon$ denote the graph obtained from $G$ by subdividing each edge $\lceil m^{\varepsilon/(1-\varepsilon)}\rceil$ times.  For a class of graphs $\GG$, let $\GG^\varepsilon$ denote
the class consisting of all induced subgraphs of the graphs $G^\varepsilon$ for $G\in \GG$.  

\begin{lemma}\label{lemma-subdivsep}
For every class of graphs $\GG$ and every $0<\varepsilon<1$, we have $s_{\GG^{\varepsilon}}(n)=O(n^{1-\varepsilon})$.
If $\GG$ contains all $3$-regular graphs, then $s_{\GG^{\varepsilon}}(n)=\Omega(n^{1-\varepsilon})$.
\end{lemma}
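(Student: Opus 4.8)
The plan is to analyze the two directions separately, in each case relating separators in $G^\varepsilon$ to treewidth of the original graph $G$ via Lemma~\ref{lemma-twtos} for the upper bound, and via Observation~\ref{obs-subdiv} together with known treewidth-vs-separator lower bounds for the matching lower bound. Throughout, set $t(x)=x^{\varepsilon/(1-\varepsilon)}$, so that $G^\varepsilon$ is obtained from an $m$-vertex graph $G$ by subdividing each edge $\lceil t(m)\rceil$ times; the corresponding function $x\mapsto xt(x)=x^{1/(1-\varepsilon)}$ has inverse $p(y)=y^{1-\varepsilon}$.

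For the upper bound $s_{\GG^\varepsilon}(n)=O(n^{1-\varepsilon})$: every graph $G\in\GG$ trivially satisfies $s(H)\le|V(H)|$ for every induced subgraph $H$, so we may apply Lemma~\ref{lemma-twtos} with $f(x)=x$ (and this $t$, $p$). A subtle point is that Lemma~\ref{lemma-twtos} as stated subdivides each edge \emph{at least} $t(|V(G)|)$ times, whereas $G^\varepsilon$ subdivides exactly $\lceil m^{\varepsilon/(1-\varepsilon)}\rceil\ge t(m)$ times, so the hypothesis is met. The lemma then yields, for every induced subgraph $H'$ of $G^\varepsilon$, a balanced separator of order at most $15f(p(2|V(H')|))+1=15(2|V(H')|)^{1-\varepsilon}+1=O(|V(H')|^{1-\varepsilon})$. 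Taking the maximum over all $H'$ of order at most $n$ and over all $G\in\GG$ gives $s_{\GG^\varepsilon}(n)=O(n^{1-\varepsilon})$, as desired.

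For the lower bound, assume $\GG$ contains all $3$-regular graphs. It is classical (e.g.\ via expander constructions, or the explicit bounded-degree expanders of Lubotzky–Phillips–Sarnak) that there exist $3$-regular $k$-vertex graphs $G_k$ of treewidth $\Omega(k)$; such graphs have no balanced separator of order $o(k)$. Now $G_k^\varepsilon$ has $N:=k+|E(G_k)|\lceil k^{\varepsilon/(1-\varepsilon)}\rceil=\Theta(k^{1/(1-\varepsilon)})$ vertices, hence $k=\Theta(N^{1-\varepsilon})$. Since $G_k$ is a minor of $G_k^\varepsilon$, we have $\tw(G_k^\varepsilon)\ge\tw(G_k)=\Omega(k)=\Omega(N^{1-\varepsilon})$, and a graph of treewidth $\ge c$ contains no balanced separator of order $<c/2$ or so (more precisely, bounded separators bound treewidth by Theorem~\ref{thm-tw}). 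Concretely: if $G_k^\varepsilon$ had a balanced separator of order $o(N^{1-\varepsilon})$—and since $\GG^\varepsilon$ is hereditary we would then need this for all induced subgraphs to bound treewidth—Theorem~\ref{thm-tw} would force $\tw(G_k^\varepsilon)=o(N^{1-\varepsilon})$, a contradiction. Hence $s(G_k^\varepsilon)=\Omega(N^{1-\varepsilon})$, and since $G_k^\varepsilon\in\GG^\varepsilon$ this gives $s_{\GG^\varepsilon}(n)=\Omega(n^{1-\varepsilon})$.

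The main obstacle is the lower bound: one must produce, for infinitely many $N$, a graph on $\Theta(N)$ vertices in $\GG^\varepsilon$ whose treewidth is $\Omega(N^{1-\varepsilon})$, and then convert a treewidth lower bound into a separator lower bound. The first part is handled cleanly by subdividing bounded-degree expanders (which lie in $\GG$ by hypothesis) and using the elementary fact that subdivision preserves treewidth as a minor operation; the second part is exactly the contrapositive of Theorem~\ref{thm-tw} applied across all induced subgraphs. The only care needed is bookkeeping the vertex count $N=\Theta(k^{1/(1-\varepsilon)})$ so that the exponent $1-\varepsilon$ comes out correctly, and observing that no balanced separator claim for a single graph alone suffices—one invokes Theorem~\ref{thm-tw}, which needs the bound on all induced subgraphs, but that is automatic since we are bounding $s_{\GG^\varepsilon}$, a quantity defined as a maximum over the hereditary class $\GG^\varepsilon$.
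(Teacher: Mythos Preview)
Your upper bound is identical to the paper's: both invoke Lemma~\ref{lemma-twtos} with $f(x)=x$ and $t(m)=\lceil m^{\varepsilon/(1-\varepsilon)}\rceil$.

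For the lower bound you take a genuinely different route. The paper argues \emph{directly} that $s(G^\varepsilon)=\Omega(m)$ for a $3$-regular expander $G$ on $m$ vertices: it assumes a small balanced separator $X$, splits $V(G^\varepsilon)\setminus X$ into two large parts $C_1,C_2$, shows each $C_i$ must contain $\Omega(m)$ branch vertices, and then uses the expansion of $G$ to force $|X|=\Omega(m)$. You instead go through treewidth: $\tw(G_k^\varepsilon)\ge\tw(G_k)=\Omega(k)$ because $G_k$ is a minor of $G_k^\varepsilon$ and expanders have linear treewidth, then invoke the contrapositive of Theorem~\ref{thm-tw}. This is slicker and reuses an existing tool rather than an ad hoc counting argument; the paper's approach, on the other hand, yields the slightly stronger conclusion that $G_k^\varepsilon$ \emph{itself} has no small balanced separator.

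That last point is where your write-up slips. From $\tw(G_k^\varepsilon)=\Omega(N^{1-\varepsilon})$ and Theorem~\ref{thm-tw} you can only conclude that \emph{some induced subgraph} $H$ of $G_k^\varepsilon$ satisfies $s(H)=\Omega(N^{1-\varepsilon})$; you cannot conclude $s(G_k^\varepsilon)=\Omega(N^{1-\varepsilon})$ as you write, because a small separator in $G_k^\varepsilon$ alone does not give small separators in all its induced subgraphs. Your parenthetical ``since $\GG^\varepsilon$ is hereditary we would then need this for all induced subgraphs'' does not repair this: hereditariness of $\GG^\varepsilon$ is a statement about membership, not about separator sizes. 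Fortunately the weaker conclusion is exactly what you need, since any such $H$ lies in $\GG^\varepsilon$ with $|V(H)|\le N$, so $s_{\GG^\varepsilon}(N)\ge s(H)=\Omega(N^{1-\varepsilon})$. Your final paragraph shows you essentially understand this; just correct the intermediate claim to be about $s_{\GG^\varepsilon}(N)$ rather than $s(G_k^\varepsilon)$.
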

\begin{proof}
Applying Lemma~\ref{lemma-twtos} with $f(n)=n$ and $t(m)=\lceil m^{\varepsilon/(1-\varepsilon)}\rceil$ (so that $p(n)=\Theta(n^{1-\varepsilon})$),
we have $s_{\GG^\varepsilon}(n)=O(n^{1-\varepsilon})$.

Conversely, let $G\in \GG$ be a $3$-regular $\tfrac{3}{20}$-expander with $m=\Theta(n^{1-\varepsilon})$ vertices
(such a graph exists for every sufficiently large even number of vertices~\cite{bolo}).  Note that $|V(G^{\varepsilon})|=\Theta(m\cdot m^{\varepsilon/(1-\varepsilon)})=\Theta(m^{1/(1-\varepsilon)})=\Theta(n)$.
We now argue that $s(G^{\varepsilon})=\Omega(m)$, which implies $s_\GG(n)=\Omega(m)=\Omega(n^{1-\varepsilon})$.

Let $M$ be the set of vertices of $G^{\varepsilon}$ of degree three.
Suppose for a contradiction $X$ is a balanced separator in $G^{\varepsilon}$ of size $o(m)$.  For sufficiently large $n$, this implies $V(G^\varepsilon)$
can be expressed as disjoint union of $X$, $C_1$, and $C_2$, where $C_1$ and $C_2$ are unions of components of $G^\varepsilon-X$ and
$|C_1|,|C_2|\ge |V(G^{\varepsilon})|/4=\Omega(n)$.  Each component of $G^\varepsilon-X$ disjoint from $M$ has two neighbors in $X$, implying the total number
of vertices in such components is at most $\tfrac{3}{2}|X|\lceil m^{\varepsilon/(1-\varepsilon)}\rceil=o(n)$.
Furthermore, a component of $G-X$ containing $k\ge 1$ vertices of $M$ has $O(km^{\varepsilon/(1-\varepsilon)})$ vertices.
Consequently, $|C_1\cap M|,|C_2\cap M|=\Omega(n/m^{\varepsilon/(1-\varepsilon)})=\Omega(m)$.
By symmetry, we can assume $|C_1\cap M|\le m/2$, and since $G$ is a $\tfrac{3}{20}$-expander, we have
$N_G(C_1\cap M)=\Omega(m)$.  However, this implies $|X|=\Omega(m)$, which is a contradiction.
\end{proof}

Applying this lemma with $\GG$ consisting of all $3$-regular graphs, we obtain the following bound.

\begin{lemma}\label{lemma-sdex}
For $0<\varepsilon\le 1$, $b_\varepsilon\le \frac{1}{2\varepsilon}-\frac{1}{2}$.
\end{lemma}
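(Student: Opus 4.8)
The plan is to prove the inequality by exhibiting a single hereditary class $\GG'$ with $s_{\GG'}(n)=\Theta(n^{1-\varepsilon})$ whose expansion satisfies $\nabla_{\GG'}(r)=O\bigl(r^{\frac{1}{2\varepsilon}-\frac12}\bigr)$; since this is $o(r^b)$ for every $b>\frac{1}{2\varepsilon}-\frac12$, the definition of $b_\varepsilon$ forces $b_\varepsilon\le\frac{1}{2\varepsilon}-\frac12$. For $\varepsilon=1$ the bound is already contained in Corollary~\ref{cor-known} ($b_1\le B_1\le 0$), so I will assume $0<\varepsilon<1$. Following the hint, the witness class is $\GG'=\GG^\varepsilon$ with $\GG$ the class of all $3$-regular graphs: it is hereditary by construction, and Lemma~\ref{lemma-subdivsep} gives $s_{\GG^\varepsilon}(n)=\Theta(n^{1-\varepsilon})$ (the upper bound unconditionally, the lower bound because $\GG$ contains all $3$-regular graphs). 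Thus the only real task is the upper bound on $\nabla_{\GG^\varepsilon}(r)$.

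To estimate $\nabla_r(G^\varepsilon)$ for a $3$-regular graph $G$ on $m$ vertices, where $G^\varepsilon$ subdivides each edge $s:=\lceil m^{\varepsilon/(1-\varepsilon)}\rceil$ times, I would first record that in $G^\varepsilon$ only the $m$ original vertices have degree $3$ and all others have degree $2$, so that a connected subgraph $B$ of $G^\varepsilon$ sends at most $j+2$ edges to its complement, where $j$ is the number of degree-$3$ vertices in $V(B)$ (this follows from $\sum_{x\in V(B)}\deg_{G^\varepsilon}(x)-2|E(G^\varepsilon[V(B)])|\le(2|V(B)|+j)-2(|V(B)|-1)$, using connectivity of $B$). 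Then I split into cases on $s$ versus $r$. If $s\ge 2r$, any two distinct degree-$3$ vertices of $G^\varepsilon$ lie at distance more than $2r$, so every branch set of an $r$-shallow minor $H$ (having radius at most $r$) contains at most one degree-$3$ vertex; the boundary estimate then yields $\Delta(H)\le 3$, hence $\nabla_r(G^\varepsilon)<3$. If $s<2r$, then $m^{\varepsilon/(1-\varepsilon)}\le s<2r$, so $m<(2r)^{(1-\varepsilon)/\varepsilon}$, and I bound $\nabla_r(G^\varepsilon)\le\nabla_\infty(G^\varepsilon)$ by a crude edge count: because $G^\varepsilon$ is a subdivision of a $3$-regular graph, its cyclomatic number is $O(m)$, so $|E(G^\varepsilon[S])|-|S|=O(m)$ for every $S\subseteq V(G^\varepsilon)$; contracting connected branch sets (and assuming the minor connected) shows every minor of $G^\varepsilon$ on $k$ vertices has at most $O(m)+k$ edges, and combining with the trivial bound $\binom{k}{2}$ and optimizing over $k$ gives $\nabla_\infty(G^\varepsilon)=O(\sqrt m)=O\bigl(r^{(1-\varepsilon)/(2\varepsilon)}\bigr)=O\bigl(r^{\frac{1}{2\varepsilon}-\frac12}\bigr)$. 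Since $\tfrac{1}{2\varepsilon}-\tfrac12>0$, the two cases combine to give $\nabla_r(G^\varepsilon)=O\bigl(r^{\frac{1}{2\varepsilon}-\frac12}\bigr)$ with an absolute implied constant, and as every member of $\GG^\varepsilon$ is an induced subgraph of such a $G^\varepsilon$, $\nabla_{\GG^\varepsilon}(r)=O\bigl(r^{\frac{1}{2\varepsilon}-\frac12}\bigr)$.

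I expect the only genuine difficulty to be obtaining the exponent $\frac{1}{2\varepsilon}-\frac12$ rather than a weaker value such as $\frac{1}{2\varepsilon}$ or $\frac1\varepsilon-1$: this is exactly what forces the use of the \emph{sharp} minor-density bound $\nabla_\infty(G^\varepsilon)=O(\sqrt m)$, which relies on the cyclomatic number of $G^\varepsilon$ being $\Theta(m)$ and not $\Theta(ms)=\Theta(|V(G^\varepsilon)|)$, together with placing the case cutoff at $s\approx r$ so that in the second case $m=O(r^{(1-\varepsilon)/\varepsilon})$ and hence $\sqrt m=O\bigl(r^{\frac{1}{2\varepsilon}-\frac12}\bigr)$. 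A secondary point requiring attention is disconnected $G$: there $s$ is defined through the total number of vertices, but since extra subdivision only shrinks shallow minors, one simply applies the cyclomatic estimate to each component of $G^\varepsilon$ (each a subdivision of a connected $3$-regular graph, with cyclomatic number linear in its own order). Everything else is routine.
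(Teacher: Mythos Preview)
Your proposal is correct and follows essentially the same approach as the paper: the same witness class $\GG_3^\varepsilon$ (with $\GG_3$ the $3$-regular graphs), the same appeal to Lemma~\ref{lemma-subdivsep} for $s_{\GG_3^\varepsilon}(n)=\Theta(n^{1-\varepsilon})$, and the same case split on whether $r$ is small or large compared to the subdivision length $s=\lceil m^{\varepsilon/(1-\varepsilon)}\rceil$. The only notable difference is in the large-$r$ case, where the paper bounds the \emph{minimum degree} of an arbitrary minor $F$ by $2+\sqrt{m}$ (choosing $v$ with $|B_v\cap M|$ minimum and combining $\deg_F(v)\le 2+|B_v\cap M|$ with $(\deg_F(v)+1)|B_v\cap M|\le m$), whereas you reach the same $\nabla_\infty(G^\varepsilon)=O(\sqrt m)$ via the cyclomatic-number argument; both are short and equivalent in strength here.
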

\begin{proof}
We have $b_1=0$ by Corollary~\ref{cor-known}, and thus we can assume $\varepsilon<1$.
Let $\GG_3$ be the class of all $3$-regular graphs.  By Lemma~\ref{lemma-subdivsep}, we have $s_{\GG_3^\varepsilon}(n)=\Theta(n^{1-\varepsilon})$.

Let $G$ be a $3$-regular graph with $m$ vertices, and consider any $r$-shallow minor $F$ of $G^\varepsilon$.
If $4r<\lceil m^{\varepsilon/(1-\varepsilon)}\rceil$, then $F$ is $2$-degenerate, and thus it has density at most $2$.
Hence, we can assume $r=\Omega(m^{\varepsilon/(1-\varepsilon)})$.  Let $M$ be the set of vertices of $G^\varepsilon$ of degree three, for each vertex $v\in V(F)$ let $B_v$ be the vertex set of the
subgraph of $G^\varepsilon$ contracted to $v$, and let $v$ be the vertex of $F$ with $|B_v\cap M|$ minimum.  Note that $\deg_F(v)\le 2+|B_v\cap M|$.
Furthermore, since the sets $B_u$ for $u\in V(F)$ are pairwise disjoint, we have
$(\deg_F(v)+1)|B_v\cap M|\le |B_v\cap M|+\sum_{u\in N(v)} |B_u\cap M|\le |M|=m$.
Consequently, $\deg_F(v)\le 2+\min(|B_v\cap M|, m/|B_v\cap M|)\le 2+\sqrt{m}=O(r^{\frac{1}{2\varepsilon}-\frac{1}{2}})$.
Therefore, $\nabla_{\GG_3^\varepsilon}(r)=O(r^{\frac{1}{2\varepsilon}-\frac{1}{2}})$.
\end{proof}
Furthermore, note that if $G$ is an expander, then $G$ contains as an $O(\log m)$-shallow minor a clique with $\Omega(\sqrt{m/\log m})$ vertices by Theorem~\ref{thm-prs},
and thus if $m=\Theta(r^{\frac{1}{\varepsilon}-1})$, we conclude $G^\varepsilon$ contains as an $O(r\log r)$-shallow minor a clique with $\Omega(r^{\frac{1}{2\varepsilon}-\frac{1}{2}}/\pll r)$ vertices.
Consequently, $\nabla_{\GG_3^\varepsilon}(r)=\Omega(r^{\frac{1}{2\varepsilon}-\frac{1}{2}}/\pll r)$; hence, the analysis of this example cannot be substantially improved.

This construction also gives a lower bound for $B_\varepsilon$ that matches the upper bound from Corollary~\ref{cor-known}.
\begin{lemma}\label{lemma-scl}
For $0<\varepsilon\le 1$, $B_\varepsilon\ge \frac{1}{\varepsilon}-1$.
\end{lemma}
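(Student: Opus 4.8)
The plan is to prove $B_\varepsilon\ge 1/\varepsilon-1$ by exhibiting, for each $0<\varepsilon<1$, a hereditary class with balanced separators of order $\Theta(n^{1-\varepsilon})$ whose expansion grows like $r^{1/\varepsilon-1}$; the case $\varepsilon=1$ is immediate, since Corollary~\ref{cor-known} already gives $B_1\ge b_1=0$. The class will again be $\GG^\varepsilon$ for a suitable $\GG$, but the dense shallow minors will now come from \emph{subdivided cliques}, in contrast with the subdivided bounded-degree expanders used in Lemmas~\ref{lemma-subdivsep}--\ref{lemma-sdex}, which only produce expansion of order $\sqrt{m}\approx r^{1/(2\varepsilon)-1/2}$.

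Concretely, I would take $\GG$ to be the class of all graphs (the union of all $3$-regular graphs with all complete graphs would work equally well), and consider the hereditary class $\GG^\varepsilon$. By the first part of Lemma~\ref{lemma-subdivsep}, $s_{\GG^\varepsilon}(n)=O(n^{1-\varepsilon})$, and since $\GG$ contains every $3$-regular graph, the second part of Lemma~\ref{lemma-subdivsep} gives $s_{\GG^\varepsilon}(n)=\Omega(n^{1-\varepsilon})$; hence $s_{\GG^\varepsilon}(n)=\Theta(n^{1-\varepsilon})$.

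The heart of the argument is the claim that, writing $s=\lceil m^{\varepsilon/(1-\varepsilon)}\rceil$ so that $K_m^\varepsilon$ is obtained from $K_m$ by replacing each edge with a path of length $s+1$, the graph $K_m$ is a $\lceil s/2\rceil$-shallow minor of $K_m^\varepsilon$. To see this, fix an ordering of $V(K_m)$ and, for an edge $vu$ with $v<u$, split the $s$ internal vertices of the corresponding path so that $v$ keeps the $\lceil s/2\rceil$ of them nearest to it and $u$ keeps the remaining $\lfloor s/2\rfloor$; letting $W_v$ consist of $v$ together with all internal vertices it keeps over its incident edges, each $W_v$ is a connected ``spider'' of radius at most $\lceil s/2\rceil$, the sets $W_v$ partition $V(K_m^\varepsilon)$, and for $u\ne v$ the $v$--$u$ path contains an edge joining $W_v$ to $W_u$. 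Choosing $m=\lfloor(2r)^{(1-\varepsilon)/\varepsilon}\rfloor$ ensures $\lceil s/2\rceil\le r$ once $r$ is large, so $K_m$ is an $r$-shallow minor of $K_m^\varepsilon\in\GG^\varepsilon$, and therefore
$$\nabla_{\GG^\varepsilon}(r)\ge\nabla_r(K_m^\varepsilon)\ge\frac{|E(K_m)|}{|V(K_m)|}=\frac{m-1}{2}=\Omega\bigl(r^{1/\varepsilon-1}\bigr).$$
Since $\Omega(r^{1/\varepsilon-1})$ is not $O(r^B)$ for any $B<1/\varepsilon-1$, and $\GG^\varepsilon$ is hereditary with $s_{\GG^\varepsilon}(n)=\Theta(n^{1-\varepsilon})$, the definition of $B_\varepsilon$ yields $B_\varepsilon\ge 1/\varepsilon-1$.

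Most of the remaining work is routine parameter bookkeeping (that $\lceil m^{\varepsilon/(1-\varepsilon)}\rceil\le 2r$ for $m=\lfloor(2r)^{(1-\varepsilon)/\varepsilon}\rfloor$ and all large $r$, and that then $m=\Theta(r^{(1-\varepsilon)/\varepsilon})$), which is of exactly the same flavour as the computations in the proof of Corollary~\ref{cor-lower}. The only genuinely new point — and the one place where a little care is needed — is the decision to subdivide a \emph{clique} rather than a sparse expander: because $K_m$ is itself dense, it survives as a shallow minor of its subdivision with density $\Theta(m)$, which is precisely the exponent $1/\varepsilon-1$ demanded by Corollary~\ref{cor-known}; checking that the spider branch sets have radius at most $\lceil s/2\rceil$ and partition $V(K_m^\varepsilon)$ is straightforward, so I do not expect any essential obstacle.
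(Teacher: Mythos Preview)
Your proposal is correct and follows essentially the same approach as the paper: take $\GG$ to be the class of all graphs, invoke Lemma~\ref{lemma-subdivsep} to get $s_{\GG^\varepsilon}(n)=\Theta(n^{1-\varepsilon})$, and observe that $K_m^\varepsilon$ contains $K_m$ as an $r$-shallow minor when $m\approx r^{1/\varepsilon-1}$. The paper's proof is terser (it simply sets $m=\lfloor r^{1/\varepsilon-1}\rfloor$ and asserts the shallow-minor containment without spelling out the branch sets), but your spider decomposition and the minor reparametrisation $m=\lfloor(2r)^{(1-\varepsilon)/\varepsilon}\rfloor$ amount to the same argument with the details made explicit.
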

\begin{proof}
Since $B_1=0$ by Corollary~\ref{cor-known}, we can assume $\varepsilon<1$.
Let $\GG_a$ be the class of all graphs.  By Lemma~\ref{lemma-subdivsep}, we have $s_{\GG_a^\varepsilon}(n)=\Theta(n^{1-\varepsilon})$.
For a sufficiently large integer $r$, let $m=\lfloor r^{\frac{1}{\varepsilon}-1}\rfloor$.  The graph $K_m^\varepsilon$ contains the clique $K_m$
as an $r$-shallow minor, implying $\nabla_{\GG_a^\varepsilon}(r)=\Omega(r^{\frac{1}{\varepsilon}-1})$.
\end{proof}

Finally, a similar idea enables us to obtain a better bound for $b_\varepsilon$ in the range $\tfrac{1}{2}\le \varepsilon\le 1$.
\begin{lemma}\label{lemma-sgr}
For $\tfrac{1}{2}\le \varepsilon\le 1$, $b_\varepsilon=0$.
\end{lemma}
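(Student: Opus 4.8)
The plan is to exhibit, for each $\varepsilon$ in the stated range, a single hereditary class witnessing $b_\varepsilon\le 0$; together with the bound $b_\varepsilon\ge\max(\tfrac{1}{2\varepsilon}-1,0)=0$ from Corollary~\ref{cor-known} this gives $b_\varepsilon=0$. The case $\varepsilon=1$ is already handled by Corollary~\ref{cor-known}, so assume $\tfrac{1}{2}\le\varepsilon<1$ and set $\delta=\tfrac{2\varepsilon-1}{2(1-\varepsilon)}\ge 0$. Let $R_k$ be the graph obtained from the $k\times k$ grid by subdividing each edge $\lceil k^{2\delta}\rceil$ times, and let $\GG$ consist of all induced subgraphs of the graphs $R_k$, $k\in\mathbb{N}$. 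Since subdivisions and induced subgraphs of planar graphs are planar, every member of $\GG$ is planar; hence every $r$-shallow minor of a graph in $\GG$ is a planar graph and so has density less than $3$, giving $\nabla_\GG(r)\le 3$ for every $r$. In particular $\nabla_\GG$ is not $\Omega(r^b)$ for any $b>0$, so it remains only to verify that $s_\GG(n)=\Theta(n^{1-\varepsilon})$.

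For the upper bound I would mimic the proof of Lemma~\ref{lemma-subdivsep}, but replace the trivial estimate $s(H)\le|V(H)|$ by the Lipton--Tarjan bound~\cite{lt79} $s(H)=O(\sqrt{|V(H)|})$, valid for every induced subgraph $H$ of a grid (which is planar). Applying Lemma~\ref{lemma-twtos} with $f(x)=\Theta(\sqrt{x})$ and $t(m)=\lceil m^{\delta}\rceil$ --- so that $p(y)=\Theta(y^{1/(1+\delta)})$ --- yields $s_\GG(n)=O\bigl(\sqrt{n^{1/(1+\delta)}}\bigr)=O(n^{1/(2(1+\delta))})=O(n^{1-\varepsilon})$, where the last equality is exactly the definition of $\delta$. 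For the lower bound, note that $|V(R_k)|=\Theta(k^{2(1+\delta)})=\Theta(k^{1/(1-\varepsilon)})$, that this grows by only a bounded factor when $k$ increases by $1$, and that $R_k\in\GG$; hence it suffices to show that $R_k$ has no balanced separator of order $o(k)$, which then gives $s_\GG(n)=\Omega(n^{1-\varepsilon})$.

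I expect this last claim to be the main obstacle, and would argue it in the spirit of the lower bound of Lemma~\ref{lemma-subdivsep}, with the edge-isoperimetric inequality of the grid playing the role of the expansion of the $3$-regular expander there. Suppose $X$ is a balanced separator of $R_k$ with $|X|=o(k)$, and let $M$ be the set of $k^2$ branch vertices (the original grid vertices). Split the components of $R_k-X$ into two parts $C_1,C_2$, each of total size $\Omega(|V(R_k)|)$. A component of $R_k-X$ avoiding $M$ is a subpath of a subdivided edge with at most two neighbours in $X$, so there are $O(|X|)$ such components and they contain only $o(k)\cdot O(k^{2\delta})=o(k^{1+2\delta})$ vertices altogether, negligible against $|V(R_k)|=\Theta(k^{2+2\delta})$; and a component containing $j$ branch vertices has $O(jk^{2\delta})$ vertices. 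These facts force $|C_1\cap M|,|C_2\cap M|=\Omega(k^2)$, and as $C_1\cap M$ and $C_2\cap M$ are disjoint subsets of $M$ we may assume $\Omega(k^2)\le|C_1\cap M|\le k^2/2$. The edge-isoperimetric inequality for the $k\times k$ grid now yields $\Omega(k)$ grid edges between $C_1\cap M$ and $M\setminus(C_1\cap M)$; for each such edge $ww'$ we have $w\in C_1$ and $w'\notin C_1$, so the corresponding subdivided path meets $X$, and since each vertex of $X$ lies on at most four subdivided edges this forces $|X|=\Omega(k)$, a contradiction. With $s(R_k)=\Omega(k)$ in hand, the two bounds give $s_\GG(n)=\Theta(n^{1-\varepsilon})$, completing the proof that $b_\varepsilon=0$. (At $\varepsilon=\tfrac{1}{2}$ one has $\delta=0$ and $R_k$ is simply the once-subdivided $k\times k$ grid, so this construction specialises --- up to a harmless extra subdivision --- to the planar-graphs example used in the introduction to show $b_{1/2}=0$.)
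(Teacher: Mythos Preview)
Your proof is correct and follows essentially the same approach as the paper's: take subdivisions of planar graphs with exponent $\tfrac{2\varepsilon-1}{2(1-\varepsilon)}$, use planarity for $\nabla_\GG(r)\le 3$, use Lemma~\ref{lemma-twtos} with $f(x)=O(\sqrt{x})$ for the separator upper bound, and use grid isoperimetry for the lower bound. The only differences are cosmetic: you restrict $\GG$ to subdivided grids rather than subdivided arbitrary planar graphs (harmless, since the paper only invokes grids for the lower bound anyway), and you spell out the isoperimetric lower-bound argument in the style of Lemma~\ref{lemma-subdivsep} where the paper simply appeals to ``standard isoperimetric inequalities''.
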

\begin{proof}
Since $b_1=0$ by Corollary~\ref{cor-known}, we can assume $\varepsilon<1$.
For a graph $G$ with $m$ vertices, let $G'$ denote the graph obtained from $G$ by subdividing each edge $\bigl\lceil m^{\frac{2\varepsilon-1}{2-2\varepsilon}}\bigr\rceil$ times.
Let $\GG$ consist of all induced subgraphs of the graphs $G'$ for all planar graphs $G$.  All graphs in $\GG$ are planar, and thus $\nabla_\GG(r)\le 3=O(r^0)$ holds for every $r\ge 0$.
Standard isoperimetric inequalities applied with $G$ being a $(t\times t)$-grid for $t=\Theta(n^{1-\varepsilon})$ (so that $|V(G')|=\Theta(n)$)
show that every balanced separator in $G'$ has size $\Omega(t)=\Omega(n^{1-\varepsilon})$, implying $s_{\GG'}(n)=\Omega(n^{1-\varepsilon})$.
Conversely, Lemma~\ref{lemma-twtos} applied with $f(n)=O(\sqrt{n})$ and $t(m)=\bigl\lceil m^{\frac{2\varepsilon-1}{2-2\varepsilon}}\bigr\rceil$ (so that
$p(n)=\Theta(n^{2-2\varepsilon})$) implies $s_{\GG'}(n)=O(n^{1-\varepsilon})$.
\end{proof}

Let us summarize our findings:
We have $b_\varepsilon=0$ when $\tfrac{1}{2}\le \varepsilon\le 1$,
$$\frac{1}{2\varepsilon}-1\le b_\varepsilon\le \frac{1}{2\varepsilon}-\frac{1}{2}$$
when $0<\varepsilon<\tfrac{1}{2}$,
and $$B_\varepsilon=\frac{1}{\varepsilon}-1$$
when $0<\varepsilon\le 1$.
In particular, if $\varepsilon<1$, then $b_\varepsilon\neq B_\varepsilon$.

The bounds for $b_\varepsilon$ differ by at most $1/2$.
It is unclear whether the upper or the lower bound can be improved.  While the fact that $b_{1/2}=0$ matches the lower bound
suggests that a better construction improving the upper bound in general could exist, it is also plausible that this is just
a ``dimension 2'' artifact and in fact the lower bound might be possible to improve for $\varepsilon<1/2$ (possibly leading to discontinuity
of $b_\varepsilon$ at $\varepsilon=1/2$).

Instead of $b_\varepsilon$ and $B_\varepsilon$, the following parameters (constraining $s_\GG(n)$ from below
by $\Omega(n^{1-\varepsilon})$ and from above by $O(n^{1-\varepsilon})$, rather than by $\Theta(n^{1-\varepsilon})$) might be considered
more natural. For $0<\varepsilon\le 1$,
\begin{itemize}
\item let $b'_\varepsilon$ denote the supremum of real numbers $b'$ for which every hereditary class $\GG$ of graphs such that $s_\GG(n)=\Omega(n^{1-\varepsilon})$
satisfies $\nabla_\GG(r)=\Omega(r^{b'})$, and
\item let $B'_\varepsilon$ denote the infimum of real numbers $B'$ for which every hereditary class $\GG$ of graphs such that $s_\GG(n)=O(n^{1-\varepsilon})$
satisfies $\nabla_\GG(r)=O(r^{B'})$.
\end{itemize}
By Corollary~\ref{cor-lower} and Theorem~\ref{thm-upper}, we have
$$\max\Bigl(\frac{1}{2\varepsilon}-1,0\Bigr)\le b'_\varepsilon\le b_\varepsilon\le B_\varepsilon\le B'_\varepsilon\le \frac{1}{\varepsilon}-1.$$
Hence, $b'_\varepsilon=b_\varepsilon=0$ when $\tfrac{1}{2}\le \varepsilon\le 1$,
$$\frac{1}{2\varepsilon}-1\le b'_\varepsilon\le b_\varepsilon\le \frac{1}{2\varepsilon}-\frac{1}{2}$$
when $0<\varepsilon<\tfrac{1}{2}$,
and $$B'_\varepsilon=B_\varepsilon=\frac{1}{\varepsilon}-1$$
when $0<\varepsilon\le 1$.  It seems likely that $b'_\varepsilon=b_\varepsilon$ when $0<\varepsilon<\tfrac{1}{2}$ as well,
but this is not obvious: Consider any hereditary class $\GG'$ such that $s_{\GG'}(n)=\Omega(n^{1-\varepsilon})$.
Without loss of generality, we can assume $\GG'$ is monotone (closed under subgraphs) rather than just hereditary.
To try to transform $\GG'$ to a hereditary class $\GG$ with $s_\GG(n)=\Theta(n^{1-\varepsilon})$, it is natural
to let $\GG$ consist of all graphs $G\in \GG$ such that every induced subgraph $H$ of $G$ satisfies $s(H)=O(|V(H)|^{1-\varepsilon})$.
However, as we have to ensure this bound holds for all induced subgraphs $H$ (in order for $\GG$ to be hereditary),
it is not clear the remaining graphs are sufficient to enforce $s_\GG(n)=\Omega(n^{1-\varepsilon})$.

\bibliographystyle{siam}
\bibliography{../data.bib}

\end{document}